\newtheorem{theorem}{Theorem}[section]
\newtheorem{corollary}{Corollary}[theorem]
\newtheorem{lemma}{Lemma}[theorem]
\def\oge{\leavevmode\raise
.3ex\hbox{$\scriptscriptstyle\langle\!\langle\,$}}
\def\feg{\leavevmode\raise
.3ex\hbox{$\scriptscriptstyle\,\rangle\!\rangle$}}
\newcommand{\pair}[1]{{\langle #1 \rangle}}
\def\N0{\hbox{\sf N}_0}
\newcommand{\ort}[1]{{#1}^{\perp}}
\newcommand{\del}[1]{}
\begin{document}

\title{The Regular Element Property in Constructive Mathematics}

\author{Thierry Coquand}
\date{Computer Science Department, University of Gothenburg}
\maketitle


\section*{Introduction}

The goal of this note is to present Kaplansky's proof of the Regular Element Property \cite{Kap} and to explain
how this argument can be adapted to the case of a coherent, strongly discrete and Noetherian (with an inductive
definition of Noetherian \cite{JL}) rings in a constructive setting. We thus get, in this setting, an algorithm
which given a f.g. regular ideal $I = \pair{a_1,\dots,a_n}$, build a regular element in $I$.
This complements in a way the previous treatment by Richman \cite{Richman}.

One maybe interesting feature of the constructive reformulation is that it uses the technique of point-free open
induction \cite{open,berger}. (A previous inductive proof of the nilregular element property \cite{nil} used a
simpler form of induction.)

\section{Classical Proof}

 We recall that {\em incomparable} prime ideals $P_1,\dots,P_m$ are {\em independent}: we cannot have $\cap_{i\neq k}P_i\subseteq P_k$
 since this would imply that we have  $i\neq k$ such that $P_i\subseteq P_k$. 

Let $\ort{v}$ be the ideal of elements $x$ such that $vx = 0$. An ideal of the from $\ort{v}$ for $v\neq 0$
is called {\em orthogonal} ideal.

 Each {\em maximal} orthogonal ideal $\ort{v}$ is a {\em prime} ideal: if $ab$ in $\ort{v}$ then
 $abv=0$ and $av\neq 0$ then $\ort{(av)} = \ort{v}$ by maximality  and hence $b$ in $\ort{v}$.

 So if we have $v_1,\dots,v_m$ non zero elements such that each
 $\ort{v_i}$ is orthogonal {\em and} maximal, they are necessarily incomparable and prime
 and so the ideals $\ort{v_i}$ are independent: $\cap_{i\neq k}\ort{v_i}$ is not
 a subset of $\ort{v_k}$.

 Using Noetherianity, we see that the set of maximal orthogonal ideals has to be finite: if $v_{m+1}$ is in
 $\pair{v_1,\dots,v_m}$ then $\cap_{i\leqslant m} \ort{v_i}$ is a subset of $\ort{v_{m+1}}$
 and then $v_1,\dots,v_m,v_{m+1}$ cannot be independent.

 We consider then the finite set $\ort{v_1},\dots,\ort{v_m}$ of {\em all} maximal orthogonal ideals. It is an independent set.
 So, for each $k$, we have $u_k$ such that $u_kv_k \neq 0$ and $u_kv_i = 0$ if $i\neq k$.

 \medskip

 Let $I = \pair{a_1,\dots,a_n}$ be a regular ideal. For each $k$ we have $i_k$ such that $u_kv_ka_{i_k}\neq 0$ since $u_kv_k\neq 0$.
 Let $a = \Sigma_k u_ka_{i_k}$ element of $I$. We have $av_k = u_kv_ka_{i_k}\neq 0$. I claim that $a$ is regular.
 Indeed if $va = 0$ and $v\neq 0$ we have $a$ in $\ort{v}$ and $a$ is not in any $\ort{v_1},\dots,\ort{v_m}$ which
 contradicts the fact that this is a list of all maximal orthogonal ideals.
 
\section{Constructive Version}

We are going to see that the classical argument of the first section can be read as an algorithm. The idea is to proceed
{\em dynamically}: we use a finite set $\ort{v_1},\dots,\ort{v_m}$ which is an approximation of the set of all maximal orthogonal
ideals to compute a candidate for a regular element in a given regular ideal. If it is not regular, we use this to compute
a better approximation. The problem is then reduced to showing that this process will end eventually.

\medskip

If $S$ is a relation on a set $X$, we say that $x$ is $S$-accessible if, and only if,  all $y$ such that $S(x,y)$ are
$S$-accessible. The relation $S$ is well-founded if, and only if,  all elements are accessible.

\medskip

We assume now that the ring $R$ is {\em coherent} and {\em strongly discrete}. Each ideal $\ort{v}$ is then a f.g. ideal.

We can then decide inclusion between f.g. ideals. We write $I<J$ for the fact that $I$ is a {\em strict} subset of $J$.

We assume that $<$ is well-founded.

We consider then following relation: $v_1,\dots,v_n \prec w_1,\dots,w_m$
(for $m\leqslant n$) if, and only if,  $v_i=w_i$ for $i<m$ and $\ort{v_m}<\ort{w_m}$.

A list $v_1,\dots,v_n$ is called independent if the corresponding ideals $\ort{v_i}$ are independent.
It is non decreasing if $\ort{v_i}\subseteq \ort{v_j}$ implies $i\leqslant j$. Clearly any independent list is
non decreasing. 

If $\alpha, v$ is independent then so is $\alpha$. So the independent lists form a tree, and the branching of
the tree is given by a decidable subset of the set $R$.


\begin{lemma}\label{wf1}
  The relation $\prec$ is well-founded.
\end{lemma}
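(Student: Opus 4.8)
The plan is to reduce well-foundedness of $\prec$ to the hypothesis that strict inclusion $<$ on f.g. ideals is well-founded, using the fact that $\prec$ compares lists lexicographically. First I would observe that $\prec$ relates a list to one of its \emph{proper initial segments}: if $\vec v = v_1,\dots,v_n \prec w_1,\dots,w_m = \vec w$ then $m \leqslant n$ and $\vec w$ agrees with $\vec v$ on the first $m-1$ entries, with $\ort{v_m} < \ort{w_m}$. So the decreasing step both shortens the list (or keeps it the same length when $m = n$) and strictly \emph{enlarges} the orthogonal ideal at the position where they first differ. This suggests an induction on list length combined with the well-foundedness of $<$.

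The key steps I would carry out are as follows. Fix a list $\vec v = v_1,\dots,v_n$ and show it is $\prec$-accessible by induction on $n$. The base case $n = 0$ (the empty list) is immediate since there is no $m \leqslant 0$ with a strict enlargement, hence nothing below it. For the inductive step, I want to show $v_1,\dots,v_n$ is accessible assuming every list of length $< n$ is accessible. Here I would do a nested induction on the ideal $\ort{v_n}$ with respect to the well-founded relation $<$: that is, I prove by $<$-induction on $J$ that \emph{every} non-decreasing list $v_1,\dots,v_{n-1},v$ with $\ort{v} = J$ (more precisely, with $\ort{v}$ in a suitable downward-closed position) is accessible, using the outer induction hypothesis to handle the case $m < n$ and the inner induction hypothesis to handle the case $m = n$. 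Concretely, suppose $w_1,\dots,w_m \prec v_1,\dots,v_n$. If $m < n$, then $w_1,\dots,w_m$ has length strictly less than $n$, so it is accessible by the outer induction hypothesis. If $m = n$, then $w_i = v_i$ for $i < n$ and $\ort{v_n} < \ort{w_n}$ — wait, the direction is $\ort{v_m} < \ort{w_m}$ in the definition, so I must be careful: $\prec$ is defined so that the smaller list has the \emph{smaller} orthogonal ideal at the branching point, meaning $\prec$ as written points from large to small only if... I would re-read: $v_1,\dots,v_n \prec w_1,\dots,w_m$ iff $\ort{v_m} < \ort{w_m}$, so in the accessibility relation $S(x,y) = (x \prec y)$ reversed, "$y$ below $x$" means $y \prec x$, i.e. $\ort{y_m} < \ort{x_m}$; since $<$ is well-founded we cannot descend forever, which is exactly what we want. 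So in the $m = n$ case, $\ort{w_n} < \ort{v_n}$, and $w_n$ appears as the last element of a length-$n$ list agreeing with $\vec v$ on the first $n-1$ entries, so the inner $<$-induction hypothesis applied to $\ort{w_n}$ gives accessibility of $w_1,\dots,w_n$. In both cases the element below $\vec v$ is accessible, so $\vec v$ is accessible, completing the nested induction.

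The main obstacle I anticipate is getting the \emph{statement} of the nested induction exactly right so that the two induction hypotheses (on length, and on the ideal under $<$) have enough strength and are applied at the correct arguments — in particular, making sure the inner $<$-induction is stated uniformly over all length-$n$ lists sharing a fixed prefix, rather than for a single list, since the witness $w_n$ below $\vec v$ carries a possibly different last element. A secondary point to check, to keep the argument constructive, is that all the side conditions are decidable: inclusion of f.g. ideals is decidable by coherence and strong discreteness (as already noted in the excerpt), so "$\ort{v_m} < \ort{w_m}$" is a decidable relation, and the accessibility predicate is being used in its standard inductive (constructively meaningful) form. Once the nested induction is phrased correctly, each individual verification is routine.
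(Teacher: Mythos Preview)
Your approach is correct and is essentially the same double induction as the paper's: the paper writes the list as $v\alpha$ and inducts on the \emph{first} element $v$ (via $<$) together with the length of $\alpha$, using the recursion $v\alpha \prec \beta$ iff $\beta=w$ with $\ort{v}<\ort{w}$ or $\beta = v\gamma$ with $\alpha\prec\gamma$; you instead take length as the outer induction and $\ort{v_n}$ (the \emph{last} entry) as the inner one, which works equally well. One point to tidy up: with the paper's definition of accessibility you must consider $\vec w$ with $\vec v \prec \vec w$ (not $\vec w \prec \vec v$), so in the $m=n$ case the correct inequality is $\ort{v_n}<\ort{w_n}$, which is exactly what the inner $<$-induction hypothesis on $\ort{v_n}$ requires---your text has this right at first but then flips it at the end.
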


\begin{proof}
  We show by induction on $v$ and the length of $\alpha$ that any element $v\alpha$ if $\prec$-accessible.
  Indeed,
  we have $v\alpha \prec \beta$ if, and only if,  $\beta = w$ for some $w>v$ or $\beta = v\gamma$ with $\alpha\prec \gamma$.
\end{proof}

\begin{corollary}
 If $\alpha$ is a non decreasing list, there exists an independent $\beta$ such that $\alpha\prec^* \beta$.
\end{corollary}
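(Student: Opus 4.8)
The plan is to prove the statement by well-founded induction on $\prec$, which is justified by Lemma~\ref{wf1}. First I would observe that a $\prec$-successor of a non-decreasing list is again non-decreasing: passing from $v_1,\dots,v_n$ to $v_1,\dots,v_{m-1},w$ with $\ort{v_m}\subsetneq\ort{w}$ only \emph{enlarges} the orthogonal ideal in position $m$, and since $\ort{v_m}$ was not contained in any $\ort{v_l}$ with $l<m$ (that is exactly what non-decreasingness of the original list gives), neither is $\ort{w}$. Hence the induction can be carried out within the class of non-decreasing lists. So fix such an $\alpha=v_1,\dots,v_n$ and assume the result for every non-decreasing $\alpha'$ with $\alpha\prec\alpha'$. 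Since $R$ is coherent and strongly discrete, each $\ort{v_i}$ is finitely generated and inclusions of finitely generated ideals are decidable, so it is decidable whether $\alpha$ is independent. If it is, take $\beta=\alpha$. Otherwise it suffices to produce one non-decreasing $\alpha'$ with $\alpha\prec\alpha'$, for then the induction hypothesis applied to $\alpha'$ yields an independent $\beta$ with $\alpha'\prec^*\beta$, whence $\alpha\prec^*\beta$.

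By the observation above, to produce such an $\alpha'$ it is enough to find a position $m\leqslant n$ and an element $w\neq 0$ with $\ort{v_m}\subsetneq\ort{w}$, and to set $\alpha'=v_1,\dots,v_{m-1},w$. I would find $m$ and $w$ by the following decidable case distinction. If some two distinct members of the list have comparable orthogonal ideals, say $\ort{v_p}\subseteq\ort{v_q}$ with $p\neq q$, then non-decreasingness forces $p<q$, and $\ort{v_p}\neq\ort{v_q}$ (equality would give both $p\leqslant q$ and $q\leqslant p$), so $\ort{v_p}\subsetneq\ort{v_q}$ and one takes $m=p$, $w=v_q$. Otherwise $\ort{v_1},\dots,\ort{v_n}$ are pairwise incomparable; since $\alpha$ is not independent there is a $k$ with $\bigcap_{i\neq k}\ort{v_i}\subseteq\ort{v_k}$, and since $\ort{v_k}$ is a proper ideal (as $v_k\neq 0$) this forces $n\geqslant 3$ (for $n\leqslant 2$ the inclusion would say either $\ort{v_k}=R$ or $\ort{v_j}\subseteq\ort{v_k}$ for the other index $j$, a comparability). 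One may then pick distinct $p,q\in\{1,\dots,n\}\setminus\{k\}$ with $p<q$ and take $m=p$, $w=v_pv_q$: here $\ort{v_p}\subseteq\ort{v_pv_q}$, and the inclusion is strict because $\ort{v_pv_q}=\ort{v_p}$ would force $\ort{v_q}\subseteq\ort{v_p}$, contradicting incomparability.

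The step I expect to need the most care is the second case: one must guarantee that the replacing element $w$ is nonzero — equivalently, that $\ort{w}$ is still a \emph{proper} ideal, hence a genuine orthogonal ideal — and when the product $v_pv_q$ happens to vanish it has to be replaced by some other witness that $\ort{v_p}$ is not a maximal orthogonal ideal. This is precisely where one cannot avoid using, constructively, the hypotheses on $R$ (coherence, strong discreteness, well-foundedness of $<$) together with the observation underlying the first section: a non-decreasing list whose orthogonal ideals are \emph{all} maximal is automatically independent (they are then all prime by the remark in Section~1, pairwise distinct since the list is non-decreasing, hence pairwise incomparable, hence independent), so a non-independent non-decreasing list must contain some $\ort{v_i}$ that admits a strict enlargement to a proper orthogonal ideal. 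Once that point is settled, the remaining verifications — the strict inclusions, and the fact that the construction fits a well-founded recursion — are routine.
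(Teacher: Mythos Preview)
Your overall strategy is exactly the paper's: well-founded induction on $\prec$ (justified by Lemma~\ref{wf1}), together with the step ``if $\alpha$ is non-decreasing and not independent, produce a non-decreasing $\beta$ with $\alpha\prec\beta$''. The paper's proof consists of precisely those two sentences and says nothing more; in particular it does not spell out the preservation of non-decreasingness under $\prec$ nor any case analysis for manufacturing $\beta$. So what you have written is a genuine elaboration of the same argument, not a different one.

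Two remarks on the elaboration. First, your Case~2 construction does not actually use the non-independence witness $k$: once you have reached ``pairwise incomparable and $n\geqslant 3$'' you pick $p,q\neq k$ arbitrarily and only use that $\ort{v_p}$ and $\ort{v_q}$ are incomparable. There is no reason to avoid $k$; any incomparable pair $p<q$ gives $\ort{v_p}\subsetneq\ort{v_pv_q}$ by the same calculation, and already $n\geqslant 2$ suffices for that. Second, and more substantively, the difficulty you isolate in the last paragraph is real: one must secure $w\neq 0$, and your suggested way out --- ``a non-independent non-decreasing list must contain some $\ort{v_i}$ admitting a strict enlargement to a proper orthogonal ideal, because otherwise all $\ort{v_i}$ would be maximal, hence prime, hence the list independent'' --- is a classical contrapositive and, as written, does not yield a constructive witness $w$. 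The paper's two-line proof glosses over exactly this point, so you have put your finger on a place where the argument, as presented in the paper, is genuinely sketchy; turning it into a fully constructive step still requires work (for instance, exploiting coherence and the dependence $\bigcap_{i\neq k}\ort{v_i}\subseteq\ort{v_k}$ more directly rather than appealing to maximality).
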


\begin{proof}
  Indeed if $\alpha$ non decreasing is not independent, we find $\beta$ non decreasing such
  that $\alpha \prec \beta$. We conclude by Lemma \ref{wf1}.
\end{proof}

We now consider the lexicographic
ordering $\alpha<\beta$ if, and only if, $\beta$ is an extension of some $\gamma$ such that $\alpha\prec\gamma$
or a strict extension of $\alpha$. 

\begin{lemma}\label{wf2}
  The ordering $<$ is well-founded on {\em independent} lists.
\end{lemma}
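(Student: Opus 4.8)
The plan is to reduce the statement to the well-foundedness of a single relation merging $\prec$ with the extension order. Write $\alpha\vartriangleleft\beta$ when $\beta$ is a strict extension of $\alpha$, and put $R={\prec}\cup{\vartriangleleft}$. I claim that, once we know $R$ is well-founded on independent lists, the lemma follows by well-founded induction on $R$ (all accessibility statements below being relative to independent lists). So let $\beta$ be independent, and assume as induction hypothesis that every independent $\gamma$ with $\beta\,R\,\gamma$ is $<$-accessible; we must show $\beta$ is $<$-accessible, i.e.\ that every independent $\delta$ with $\beta<\delta$ is $<$-accessible. By the definition of $<$ there are two cases. If $\delta$ is a strict extension of $\beta$, i.e.\ $\beta\vartriangleleft\delta$, then $\beta\,R\,\delta$, so $\delta$ is $<$-accessible by the induction hypothesis. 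Otherwise $\delta$ is an extension of some $\gamma$ with $\beta\prec\gamma$; then $\gamma$ is a prefix of the independent list $\delta$, hence itself independent (iterate ``$\alpha,v$ independent implies $\alpha$ independent''), and $\beta\,R\,\gamma$, so $\gamma$ is $<$-accessible by the induction hypothesis. If $\delta=\gamma$ we are done; otherwise $\delta$ is a strict extension of $\gamma$, so $\gamma<\delta$ and $\delta$ is $<$-accessible because $\gamma$ is. Hence $\beta$ is $<$-accessible.

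It remains to prove that $R$ is well-founded on independent lists, and for this two of the three ingredients are already at hand. First, $\prec$ is well-founded on all lists, by Lemma~\ref{wf1}. Second, $\vartriangleleft$ is well-founded on independent lists, which is where Noetherianity enters, exactly as in Section~1: if $\alpha,v$ is independent with $\alpha=v_1,\dots,v_n$, then $v\notin\langle v_1,\dots,v_n\rangle$, since otherwise $xv_i=0$ for all $i$ would force $xv=0$, i.e.\ $\bigcap_{i\le n}\ort{v_i}\subseteq\ort{v}$, contradicting the independence of $v_1,\dots,v_n,v$; so each independent one-step extension strictly enlarges the finitely generated ideal $\langle v_1,\dots,v_n\rangle$, and an infinite chain of such extensions would produce an infinite strictly increasing chain of finitely generated ideals, which is excluded. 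In particular there is no infinite independent list.

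The heart of the matter is the third ingredient: the union $R={\prec}\cup{\vartriangleleft}$ is \emph{still} well-founded on independent lists. This does not follow formally (a union of well-founded relations need not be well-founded), and it is the step I expect to be the main obstacle; I would establish it by point-free open induction (\cite{open,berger}). The intuition, read informally, is coordinatewise stabilisation: along any $R$-sequence of independent lists, the annihilator ideal occurring in the first coordinate can strictly increase only finitely often (ascending chain condition), hence stabilises, whereupon the first element is eventually fixed; recursing, the second coordinate stabilises, and so on, and the common limit would be an infinite independent list, which the previous paragraph rules out. The constructive form of this argument is an instance of open induction (over, say, the spread of non-decreasing lists, using the corollary above to move between non-decreasing approximants and genuine independent lists); rendering this interaction between $\prec$ and $\vartriangleleft$ precise is the delicate point.
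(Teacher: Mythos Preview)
Your reduction step is correct but empty: since $R\subseteq{<}$ (any $\prec$-step or one-step extension is already a $<$-step) and, as your own argument shows, ${<}\subseteq R^{+}$, the two relations have the \emph{same} well-foundedness status. So nothing has been gained by passing to $R$; the whole difficulty is still in the step you explicitly leave open, namely showing that $R$ (equivalently $<$) is well-founded on independent lists. For that you offer only a classical sequential argument (``coordinates stabilise, so an infinite $R$-chain would produce an infinite independent list'') together with the remark that the constructive version should be an instance of open induction. That remark is right, but it is precisely the content of the lemma, and you have not carried it out.

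The paper's proof supplies exactly this missing piece. It does not factor through the union $R$; instead it defines the auxiliary set $M$ of independent $\alpha$ all of whose $\prec$-successors are already $<$-accessible, observes that $M$ is a subtree of the tree of independent lists, and uses Noetherianity to get well-foundedness of one-step extension \emph{restricted to $M$}. This gives an induction principle on $M$, and inside that outer induction one does an inner induction on $\ort{v}$ to show that every independent extension $\alpha v$ is $<$-accessible (hence $\alpha$ is). The key calculation is: if $\alpha v\prec\beta$ then either $\alpha\prec\beta$ (handled because $\alpha\in M$) or $\beta=\alpha w$ with $\ort{v}<\ort{w}$ (handled by the inner induction hypothesis); this shows $\alpha v\in M$, closing the loop. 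This nested induction is the concrete ``open induction'' you allude to; without it, your argument is only a proof sketch of the classical fact, not a constructive proof.
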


\begin{proof}
  We use the point-free technique of open induction \cite{open}.

  If $\alpha$ is a list, let $\pair{\alpha}$ the f.g. ideal it generates.

  Let $M$ be the set of independent $\alpha$ such that
  $\alpha\prec \beta$ implies that $\beta$ is $<$-accessible.
  If $\alpha\rightarrow \alpha v$ and $\alpha v$ is in $M$ then so is $\alpha$ since
  $\alpha \prec \beta$ implies $\alpha v \prec \beta$. So $M$ defines a subtree in the tree of
  independent sequences.

  \medskip

  This tree has a complex branching structure, since the branching is now given by the 
  predicate of being $\prec$-accessible\footnote{This is what is interesting from a constructive point of view,
  the argument is a priori in ID$_2$ and not ID$_1$.}.

  \medskip  
  
  The relation of adding one element
  $\alpha\rightarrow \alpha v$ is well-founded on the set $M$, since
  we then have $\pair{\alpha}<\pair{\alpha v}$ and the ring $R$ is Noetherian.

  This means that the following induction principle holds: is $\psi(\alpha)$ is such that
  $\psi(\alpha)$ holds whenever $\psi(\alpha v)$ holds for all $\alpha v$ in $M$, then
  $\psi$ holds for the empty sequence.

  We apply this for the case where $\psi(\alpha)$ expresses that $\alpha$ is accessible
  for the lexicographic ordering.

  \medskip

  We take $\alpha$ in $M$ and assume that $\alpha v$ in $M$ implies $\alpha v$ accessible.

  Then we show that $\alpha$ is accessible. For this we show that $\alpha v$ is accessible whenever
  $\alpha v$ is independent by induction on $v$.

  Let $v$ be such that $\alpha w$ is accessible for each $w>v$. We show that $\alpha v$ is accessible.

  It is enough to show that $\alpha v$ is in $M$.
  Assume $\alpha v\prec \beta$ then either $\alpha\prec \beta$ (and then $\beta$ is accessible since
  $\alpha$ is in $M$) or $\beta = \alpha w$ with $v<w$, and it is accessible by hypothesis.

  By the induction principle on $M$, it follows that the empty sequence is accessible.
\end{proof}

 We can also extract from the classical section the following algorithm, which holds for any ring with a decidable equality.
Given $I = \pair{a_1,\dots,a_m}$ which is regular.

\begin{lemma}\label{ind}
  If $\alpha = v_1,\dots,v_n$ if an independent sequence, then we can build $a\neq 0$ in $I$ such that
  $av_i\neq 0$ for each $i$.
\end{lemma}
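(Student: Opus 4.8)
The plan is to mirror the classical construction from Section 1, but working with the given independent list rather than with a list of *all* maximal orthogonal ideals (which is not available constructively). The key observation is that independence of $\ort{v_1},\dots,\ort{v_n}$ is exactly what the classical argument actually used to produce the multipliers $u_k$.

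Let me think about the steps. First, from independence we know that for each $k$, the ideal $\bigcap_{i\neq k}\ort{v_i}$ is not contained in $\ort{v_k}$; so there is an element $u_k$ with $u_k v_i = 0$ for all $i\neq k$ and $u_k v_k \neq 0$. (Here I would note that $\bigcap_{i\neq k}\ort{v_i}$ is f.g. — or at least that membership is decidable — by coherence and strong discreteness, so this $u_k$ can actually be found; with only decidable equality one needs the independence witness to be given concretely, which is fine since that is what "independent" supplies.) Second, since $u_k v_k \neq 0$ and $I = \pair{a_1,\dots,a_m}$ is regular, $u_k v_k$ cannot annihilate all of $I$ — wait, I need $u_k v_k a_{i_k} \neq 0$ for some $i_k$. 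Regularity of $I$ means no nonzero element annihilates $I$; since $u_k v_k \neq 0$, some generator $a_{i_k}$ has $u_k v_k a_{i_k} \neq 0$, and this index is found by searching through $a_1,\dots,a_m$ using decidability of equality. Third, set $a = \sum_k u_k a_{i_k} \in I$. Then $a v_k = \sum_j u_j a_{i_j} v_k = u_k a_{i_k} v_k \neq 0$ because $u_j v_k = 0$ for $j \neq k$. In particular $a \neq 0$, and $a v_i \neq 0$ for each $i$, which is exactly what we want.

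I would present this as a direct construction: build the $u_k$'s, then the indices $i_k$, then $a$, then verify $a v_k \neq 0$ by the orthogonality relations $u_j v_k = 0$ ($j\neq k$). The verification is a one-line computation once the pieces are in place.

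The main obstacle is not really mathematical depth — the classical argument transfers almost verbatim — but rather being careful about what "we can build" requires: one must check that the independence hypothesis genuinely hands us the witnesses $u_k$ effectively (this is where one invokes that $\ort{v}$ is f.g. and that inclusion of f.g. ideals is decidable, or alternatively builds the $u_k$ into the definition of "independent sequence"), and that the search for $i_k$ terminates, which it does because $I$ has finitely many listed generators and equality is decidable. The only genuinely delicate point is the degenerate case $n = 0$: then the list is empty, every $\ort{v_i}$-condition is vacuous, and we just need *some* nonzero element of $I$, which exists because $I$ is regular (regularity forces $I \neq 0$; e.g. $1\cdot a_j \ne 0$ for some $j$, else every $a_j = 0$ and then $1$ would annihilate $I$, contradicting regularity — so take $a = a_j$).
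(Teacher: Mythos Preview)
Your construction is correct and is exactly the one the paper intends: the lemma is stated there without a separate proof, with the reader referred to the classical argument of Section~1, and you have faithfully extracted that argument (produce the $u_k$ from independence, choose $i_k$ using regularity of $I$ and decidable equality, set $a=\sum_k u_k a_{i_k}$, and check $a v_k = u_k v_k a_{i_k}\neq 0$). Your remarks on effectivity and the $n=0$ case are fine; note that the paper stresses this step needs only decidable equality, since the witnesses $u_k$ come directly with the hypothesis of independence.
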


\medskip

We deduce from all this the following algorithm for computing a regular element in a coherent, strongly discrete
and inductive Noetherian rings.

We start from the empty sequence $\alpha_0 = ()$. Since $I$ is regular we can find $u_0$ in $I$ which is $\neq 0$.
Then either
$u_0$ is regular or we find $\alpha_1>\alpha_0$ such that $\alpha_1$ is independent. Using Lemma \ref{ind}, we compute
$u_1\neq 0$ in $I$ such that $u_1v\neq 0$ for each $v$ in $\alpha_1$. Either $u_1$ is regular, or we find $v\neq 0$
such that $u_1v = 0$ and $\alpha_2>\alpha_1 v$
which is independent, using Corollary of Lemma \ref{wf1}, and so on.

 This process should eventually produces a regular element  in $I$ using Lemma \ref{wf2}.

\end{document}